\title{ A note on a Poissonian functional and a $q$-deformed Dufresne identity }
\author{Reda \textsc{Chhaibi}        \footnote{\texttt{reda.chhaibi@math.univ-toulouse.fr}} } 
\date{}
\DeclareMathOperator{\Var}{Var}
\begin{document}

\newcommand\half{\frac{1}{2}}
\newcommand\eqlaw{\stackrel{\Lc}{=}}

\newcommand\N{{\mathbb N}}
\newcommand\Z{{\mathbb Z}}
\newcommand\Q{{\mathbb Q}}
\newcommand\R{{\mathbb R}}
\newcommand\C{{\mathbb C}}

\renewcommand\P{{\mathbb P}}
\newcommand\E{{\mathbb E}}

\newcommand\Ac{{\mathcal A}}
\newcommand\Bc{{\mathcal B}}
\newcommand\Cc{{\mathcal C}}
\newcommand\Dc{{\mathcal D}}
\newcommand\Ec{{\mathcal E}}
\newcommand\Fc{{\mathcal F}}
\newcommand\Gc{{\mathcal G}}
\newcommand\Hc{{\mathcal H}}
\newcommand\Ic{{\mathcal I}}
\newcommand\Kc{{\mathbb K}}
\newcommand\Lc{{\mathcal L}}
\newcommand\Oc{{\mathcal O}}
\newcommand\Pc{{\mathcal P}}
\newcommand\Qc{{\mathcal Q}}
\newcommand\Rc{{\mathcal R}}
\newcommand\Sc{{\mathcal S}}
\newcommand\Tc{{\mathcal T}}
\newcommand\Uc{{\mathcal U}}
\newcommand\Zc{{\mathcal Z}}

\maketitle

\setlength{\footskip}{2cm}

\numberwithin{equation}{section}
\numberwithin{figure}{section}

\newtheorem{thm}{Theorem}[section]
\newtheorem{proposition}[thm]{Proposition}
\newtheorem{corollary}[thm]{Corollary}
\newtheorem{question}[thm]{Question}
\newtheorem{conjecture}[thm]{Conjecture}
\newtheorem{definition}[thm]{Definition}
\newtheorem{example}[thm]{Example}
\newtheorem{lemma}[thm]{Lemma}
\newtheorem{properties}[thm]{Properties}
\newtheorem{property}[thm]{Property}
\newtheorem{rmk}[thm]{Remark}

\newcommand{\Card}[1]{\textup{Card($#1$)} \xspace}

\begin{abstract}
In this note, we compute the Mellin transform of a Poissonian exponential functional, the underlying process being a simple continuous time random walk. It shows that the Poissonian functional can be expressed in term of the inverse of a $q$-gamma random variable.

The result interpolates between two known results. When the random walk has only positive increments, we retrieve a theorem due to Bertoin, Biane and Yor. In the Brownian limit ($q \rightarrow 1^-$), one recovers Dufresne's identity involving an inverse gamma random variable. Hence, one can see it as a $q$-deformed Dufresne identity.
\end{abstract}
{\bf MSC 2010 subject classifications:} 33D05, 60J27, 60J65\\
{\bf Keywords:} $q$-calculus, $q$-gamma random variable, exponential functionals of compound Poisson process, $q$-analogue of Dufresne's identity for exponential functionals of Brownian motion, Wiener-Hopf factorization.


\section{Introduction}

Let $P^+$ and $P^-$ be two standard Poisson processes with respective intensities $1$ and $z = q^\mu \in \left[0, 1\right)$. In all the following, we assume $0 < q < 1$ and $\mu>0$. Then $\zeta_t := P_t^+ - P_t^-$ is a compound Poisson process with jumps $+1$ or $-1$, and drifting to $+\infty$. We investigate the law of the perpetuity
\begin{align}
\label{eq:def_perpetuity}
I^{(q, z)} := & \int_0^\infty q^{\zeta_s} ds \ ,
\end{align}
where $0<q<1$. We can also rewrite $I^{(q, z)}$ as
\begin{align}
\label{eq:def_perpetuity2}
I^{(q, z)} = & \frac{1}{1+z}\sum_{n=0}^\infty q^{B_n} \varepsilon_n \ ,
\end{align}
where $\left( \varepsilon_n; n \geq 1 \right)$ are i.i.d exponential random variables and $B_n$ is a standard random walk drifting to $+\infty$. More precisely
$$ \P\left( B_{n+1}-B_n = 1 \right) = 1-\P\left( B_{n+1}-B_n = -1 \right) = \frac{1}{1+z} > \half \ .$$

In the denomination of \cite{bib:by}, the random variable of interest is an exponential functional associated to the L\'evy process $ -\log(q) \zeta_t $ .
We have:
$$ \E\left( e^{-s \log(q) \zeta_t} \right) = \exp\left( t \psi(s) \right) $$
where the L\'evy-Khintchine exponent is:
\begin{align}
\label{eq:levy_khintchine_exp}
\psi(s) = & \left( q^{-s} - 1 \right) + z\left( q^s - 1 \right) \ .
\end{align}

The subject of exponential functionals has been extensively studied (\cite{bib:by}, \cite{bib:cpy97}, \cite{bib:bl13}) with a focus on many possible aspects such as their continuity properties \cite{bib:ls11} and their Mellin transforms \cite{bib:hy13}. From a theoretical perspective, it is known since Kesten \cite{bib:kesten} that such perpetuities appear naturally when analyzing products of random matrices. More precisely, perpetuities appear as matrix coefficients of random walks in lower (or upper) triangular matrices. The study of such objects is in fact our original motivation. In insurance and mathematical finance, the perpetuity \eqref{eq:def_perpetuity} can be understood as the total discounted cash-flow from a perpetual bond, when interest rates are random. Also, the discussion in \cite{bib:bby} shows a connection to models in DNA replication and transmission protocols. With such applications in mind, corollary \ref{corollary:cv_gamma} allows to simulate $I^{(q, z)}$ with little effort.

Our main theorem \ref{thm:main} gives an explicit formula for the Mellin transform of $I^{(q, z)}$ and its density. This allows to recognize in corollary \ref{corollary:cv_gamma} a Wiener-Hopf factorization: $I^{(q, z)}$ has the same law as the product of two independent random variables, one of which is the inverse of a $q$-gamma random variable - to be defined later in the text.

On the one hand, notice that if we specialize $z$ to zero, we obtain
$$ I^{(q)} := I^{(q, z=0)} = \sum_{n=0}^\infty q^{n} \varepsilon_n \ ,$$
which is exactly the object studied by Bertoin, Biane and Yor in \cite{bib:bby}. In that respect, our result is a refinement of their theorem 1. A purely analytic description of that case has been given by \cite{bib:berg}.

On the other hand, consider the compound Poisson process $\zeta_{t}$ properly rescaled in time and space thanks to the parameter $q$. Thanks to Donsker's invariance principle, it will converge weakly to a Brownian motion as $q \rightarrow 1$, in the uniform topology. The exponential functionals will naturally converge too, although the claim requires some work because the exponential functionals are not continuous on the sample path space. This way, one recovers Dufresne's identity involving the inverse of the usual gamma random variable.

In the light of what has been just said, corollary \ref{corollary:cv_gamma} is a $q$-deformed Dufresne identity. As we will explain, the definition of the $q$-gamma distribution from $q$-calculus can be subject to controversy since many candidates qualify as $q$-deformations. Therefore, our result shows that the $q$-gamma distribution appears in relation to the random walk exactly in the same way as the gamma distribution appears in relation to Brownian motion. For possible multi-dimensional generalisations using certain random walks on solvable Lie groups, see \cite{bib:chh14b}.\\

It is worth mentioning that the L\'evy-Khintchine exponent \eqref{eq:levy_khintchine_exp} has the following Wiener-Hopf factorization (see Chapter 6 in \cite{bib:kyprianou})
\begin{align}
\label{eq:wiener_hopf}
\psi(s) = -\phi_+(s) \phi_-(-s) & \textrm{ with } \phi_+(s) = q^{-s} - 1, \ \phi_-(s) = z q^{-s} - 1 \ .
\end{align}
The functions $\phi_+$ and $\phi_-$ are exponents of subordinators and more precisely the ascending and descending ladder height processes associated to $-\zeta \log q$. One recognizes immediately that $\phi_+$ is the exponent of a rescaled standard Poisson process while $\phi_-$ is the exponent of a rescaled Poisson process with rate $z$ and killing rate $1-z$. The rescaling in space is $-\log q$.

Let $\xi$ be a L\'evy process and let $I_\xi := \int_0^\infty e^{-\xi_s} ds$ be its exponential functional. Under fairly general assumption on $\xi$, \cite{bib:PPS12} and \cite{bib:PS12} prove that $I_\xi$ enjoys a remarkable factorization into the product of two independent exponential functionals $I_{H}$ and $I_Y$. Since this factorization \emph{in law} is a shadow of the Wiener-Hopf factorization \eqref{eq:wiener_hopf} of the Lévy exponent, it is referred to by the same denomination. In their construction, $H$ is the ascending ladder height process and $Y$ is a L\'evy process with only negative jumps, constructed from the descending ladder height process $H^-$. One of their requirements on the descending ladder height process is to have a non-decreasing L\'evy density. In the more recent paper \cite{bib:PS13}, Patie and Savov removed the hypothesis of a non-decreasing density. Indeed, they have announced in their theorem 2.1 that such Mellin transforms always take the form of a ratio of generalized Weierstrass products. In our case, the relationship to $q$-calculus arises from the fact that the generalized Weierstrass products at hand are $q$-Pochhammer symbols.

The compound Poisson process $\zeta \log q$, although simple, is a very degenerate case which we make entirely explicit, along with its relations to $q$-calculus. Moreover, our methodology of proof, which hinges on the complex-analytic Carleson's theorem, can be easily generalized.

\subsection*{Notations}
If $E$ is a topological space, we will denote by $Bor(E)$ the $\sigma$-algebra of its Borel sets. And if $X$ is a random variable valued in a measurable space $\left( E, \Ec \right)$, $\Lc\left( X \right) = \P_X$ is the probability measure on $\left( E, \Ec \right)$ known as the distribution of $X$. Equality in law is denoted by $\eqlaw$.

We will also make use of the Vinogradov symbol:
$$ f \ll g \Leftrightarrow f = \Oc(g) \ .$$

\subsection*{Structure of the paper}
Right away, we start by stating the main theorem \ref{thm:main}, as the prerequisites for such an analytic result are minimal. Then, we justify and give the definition of the $q$-gamma random variable in subsection \ref{subsection:def_q_gamma}. This allows to recognize, hidden in the expression of $s \mapsto \E\left( \left( I^{(q, z)} \right)^s \right)$, the Mellin transform of an inverse $q$-gamma random variable. As such, the main theorem is reinterpreted in a probabilistic fashion as corollary \ref{corollary:cv_gamma}: $I^{(q, z)}$ factors as the product of two independent random variables. Finally, in subsection \ref{subsection:scaling_limit}, we formulate the weak convergence of $I^{(q, z)}$ as $q \rightarrow 1^-$, jointly with the scaling of the random walk to Brownian motion.

The proofs of theorem \ref{thm:main} and proposition \ref{proposition:cv_bm} are given separately in section \ref{section:proofs}. There, we start by explaining why the approach of \cite{bib:bby} cannot be adapted to our setting.

\section{Main theorem and consequences}

The $n$-th $q$-Pochhammer symbol, for $n \in \N \sqcup \{ \infty \}$ and $a \in \C$, is given by:
$$ \left(a; q \right)_{n} = \prod_{j=0}^n\left( 1 - a q^j \right) \ ,$$
and the $q$-gamma function (\cite{bib:gasper_rahman}  eq. (1.10.1)) is defined for $x>0$ by:
$$ \Gamma_q\left( x \right) := \left( 1 - q \right)^{1-x} \frac{\left(q; q \right)_{\infty}}{\left(q^x; q \right)_{\infty}} \stackrel{q \rightarrow 1^-}{\longrightarrow} \Gamma(x) \ .$$

The main result, whose proof is postponed to section \ref{section:proofs}, is the following:
\begin{thm}
\label{thm:main}
The Mellin transform of $I^{(q, z)}$ is analytic for $\Re(s)<\mu$ and is given by:
\begin{align*}
    \E\left( \left( I^{(q, z)} \right)^s \right) 
= & \Gamma\left(1 + s\right) \frac{\left( q^{1+s}; q \right)_\infty}
                                  {\left( q; q \right)_\infty}
                             \frac{ \left( z; q \right)_\infty }
                                  { \left( z q^{-s}; q \right)_\infty } \\
= & \left(1-q\right)^{-2s} \frac{ \Gamma\left( 1 + s \right) }{ \Gamma_q\left( 1+s \right) }
                           \frac{ \Gamma_q\left(\mu-s\right) }{ \Gamma_q\left( \mu \right) } \ ,
\end{align*}
where $z = q^\mu$ with $\mu>0$. Moreover, the density of the perpetuity \eqref{eq:def_perpetuity} is:
$$ \forall x \geq 0, \ i_{q,z}(x) :=  \frac{\left( z; q \right)_\infty}{\left( q; q \right)_\infty} \sum_{m=0}^\infty \sum_{n=0}^\infty \frac{(qz)^m}{ \left( q; q \right)_m } \frac{(-1)^n q^{ \binom{n}{2} } }{\left(q;q\right)_n} e^{ -x q^{m-n} } \ . $$
\end{thm}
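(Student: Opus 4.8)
The plan is to compute the Mellin transform by exploiting the Markovian/recursive structure of the perpetuity and then to invert the resulting functional equation. First I would derive a functional equation for $M(s) := \E\left( \left( I^{(q,z)} \right)^s \right)$. Conditioning $I^{(q,z)} = \int_0^\infty q^{\zeta_s}\,ds$ on the first jump of the driving process: the first jump occurs at an exponential time of rate $1+z$, and is $+1$ with probability $\frac{1}{1+z}$ and $-1$ with probability $\frac{z}{1+z}$. Using the identity $\int_0^\infty q^{\zeta_s}\,ds = T + q^{\pm 1} \widetilde{I}$ where $T$ is the $\mathrm{Exp}(1+z)$ holding time, $\widetilde I$ an independent copy of $I^{(q,z)}$ (by the strong Markov property and spatial homogeneity of $\zeta$), and using the standard beta-type identity for moments of $\mathrm{Exp}+\text{(indep.)}$, one arrives at a relation expressing $M(s)$ in terms of $M(s-1)$, or equivalently a first-order $q$-difference equation. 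The cleanest route is to first establish, via $\frac{d}{dt}\big|_{t=0}$ of the scaling argument or directly from \eqref{eq:def_perpetuity2}, the recursion $sM(s-1) = (q^{-s}+z q^{s} ) \, M(s) - (1+z)\,M(s)$ rearranged appropriately — the precise form being $s\,M(s-1) = -\psi(-s)\,M(s)$ with $\psi$ the Lévy--Khintchine exponent \eqref{eq:levy_khintchine_exp}; equivalently $s\, M(s-1) = \phi_+(-s)\phi_-(s)\, M(s)$ using the Wiener--Hopf factorization \eqref{eq:wiener_hopf}.

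Next I would solve this $q$-difference equation explicitly. Writing $M(s) = \Gamma(1+s)\, g(s)$ absorbs the $\Gamma(1+s)/\Gamma(s)=s$ part coming from $\phi_{+}$-type factor, leaving $g$ to satisfy a pure multiplicative $q$-recursion whose solution is a ratio of $q$-Pochhammer symbols; matching $g$ against $\frac{(q^{1+s};q)_\infty}{(q;q)_\infty}\frac{(z;q)_\infty}{(zq^{-s};q)_\infty}$ is then a routine verification using $(a;q)_\infty = (1-a)(aq;q)_\infty$. The normalization $M(0)=1$ is automatic from the closed form. The equivalence with the second displayed expression is a direct substitution of the definition of $\Gamma_q$. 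To conclude that this candidate is \emph{the} Mellin transform — i.e. that the functional equation plus growth control determines $M$ uniquely on the strip $\Re(s)<\mu$ — is where the analytic input enters: I would invoke a Carleson-type uniqueness theorem (as the introduction advertises) by checking that $M$ has at most the requisite exponential growth in vertical strips, using the reflection/asymptotics of $\Gamma$ and boundedness of the $q$-Pochhammer ratios, so that two solutions of the same $q$-difference equation with the same growth must coincide. Analyticity for $\Re(s)<\mu$ follows since the only poles of the right-hand side come from $(zq^{-s};q)_\infty^{-1}$, located at $q^{-s} = q^{-\mu-k}$, i.e. $s = \mu+k$, $k\ge 0$, all outside the strip, while $\Gamma(1+s)$ contributes poles only at $s=-1,-2,\dots$.

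Finally, for the density $i_{q,z}$, I would recover it by Mellin inversion: expand the two $q$-Pochhammer factors $\frac{1}{(q;q)_\infty}(q^{1+s};q)_\infty = \sum_{n\ge 0}\frac{(-1)^n q^{\binom n2}}{(q;q)_n} q^{ns}$ (a form of the $q$-binomial theorem / Euler's identity) and $\frac{(z;q)_\infty}{(zq^{-s};q)_\infty} = \sum_{m\ge 0}\frac{(z q^{-s})^m \cdots}{}$ — more precisely using $\frac{(z;q)_\infty}{(zq^{-s};q)_\infty} = \frac{(z;q)_\infty}{(q;q)_\infty}\sum_{m\ge0} \frac{(q;q)_{?}}{}$, the cleanest being the $q$-Gauss-type summation giving $\sum_{m\ge0}\frac{(qz)^m}{(q;q)_m} q^{-ms}$ after the right bookkeeping — so that $M(s) = \frac{(z;q)_\infty}{(q;q)_\infty}\sum_{m,n\ge0} \frac{(qz)^m}{(q;q)_m}\frac{(-1)^n q^{\binom n2}}{(q;q)_n}\, \Gamma(1+s)\, q^{(n-m)s}$, and then term-by-term inverting $\Gamma(1+s) a^{s}$ against the density $x\mapsto a\, e^{-ax}$ (scaling property of the exponential: if $\varepsilon\sim\mathrm{Exp}(1)$ then $\E[(a^{-1}\varepsilon)^s] = a^{-s}\Gamma(1+s)$), with $a = q^{m-n}$. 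Justifying the interchange of summation and inversion — absolute convergence of the double series uniformly on vertical lines, controlled by $q^{\binom n2}$ beating the $\Gamma$ growth — is the technical point here, but it is standard once the growth estimates from the uniqueness step are in hand.

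The main obstacle I anticipate is the \emph{uniqueness} step: a $q$-difference equation of the form $M(s-1) = R(s) M(s)$ has a one-(quasi-)parameter family of solutions differing by $q$-periodic (i.e. $1$-periodic in $s$) functions, so pinning down $M$ requires genuine complex-analytic control — the growth bounds in vertical strips feeding into Carleson's theorem (or a Phragmén--Lindöf / Carlson-type argument). Everything else — the probabilistic recursion, the $q$-Pochhammer identities, and the final Mellin inversion — is mechanical by comparison.
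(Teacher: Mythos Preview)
Your plan is correct and follows essentially the same route as the paper: the recursion $s\,M(s-1)=-\psi(-s)\,M(s)$ you obtain (whether by first-jump conditioning or, as the paper does, by citing the Bertoin--Yor functional equation) is exactly the paper's equation \eqref{eq:mellin_recurrence}, the uniqueness step via Carlson's theorem with Stirling-type growth control of $\Gamma$ and boundedness of the $q$-Pochhammer ratios is precisely the paper's Lemma, and the density computation by expanding both Pochhammer factors and inverting $\Gamma(1+s)\,a^s$ termwise is identical to the paper's argument. The only cosmetic difference is that the paper packages the uniqueness step by forming $\varphi(r):=\text{(Mellin)}/\text{(candidate)}-1$, observing it is $1$-periodic and holomorphic on a strip of width $>1$ (hence entire), and then applying Carlson to this periodic function---but this is your ``two solutions with the same growth coincide'' argument in slightly different clothing.
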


\subsection{On the \texorpdfstring{$q$}{q}-gamma distribution}
\label{subsection:def_q_gamma}

Let $G_a$ denote a geometric random variable with parameter $0 \leq a < 1$.  Following the notations in \cite{bib:bby}, if $\left( G_{a}, G_{aq}, G_{aq^2}, \dots \right)$ are independent geometric random variables with the specified parameters, we define the random variable $R^{(q)}_a$ as:
\begin{align}
\label{eq:def_q_gamma}
R^{(q)}_a := & \left( 1- q \right)^{-1} q^{\sum_{n=0}^\infty G_{a q^n}} \ .
\end{align}
The distribution of $R^{(q)}_a$ is the $q$-analogue of the gamma distribution. Such a denomination might appear arbitrary at this point. We hope that the reader will be convinced by the following useful properties and expressions.

Remarkably, the Mellin transform of $R^{(q)}_a$ for $\Re(s) \geq 0$ is expressed thanks to $q$-exponentials. If $a=q^\kappa$, $\kappa>0$:
\begin{align}
\label{eq:mellin_q_gamma}
\E\left( \left( R^{(q)}_a \right)^s \right) = & \left( 1- q \right)^{-s} \frac{ \left( a; q \right)_\infty }{\left( a q^s; q \right)_\infty}
                                            = \frac{ \Gamma_q\left( s + \kappa \right) }{ \Gamma_q\left( \kappa \right) } \ .
\end{align}
By taking $q \rightarrow 1^-$ in equation \eqref{eq:mellin_q_gamma}, one sees the convergence in law of $R^{(q)}_{q^\kappa}$ to a gamma random variable with parameter $\kappa$. Moreover, the $q$-binomial theorem (\cite{bib:gasper_rahman} eq. (1.3.2)):
\begin{align}
\label{eq:q_binomial_thm}
\forall x \in \C, |q|<1, |z|<1, \ \frac{\left(xz; q \right)_\infty}{\left(z; q \right)_\infty} = & \sum_{n=0}^\infty \frac{\left(x; q \right)_n}{\left(q; q \right)_n} z^n
\end{align}
yields for $x=0$ and $z=a q^s$ that:
$$ \E\left( \left( R^{(q)}_a \right)^s \right) = \left( 1- q \right)^{-s} \left( a; q \right)_\infty \sum_{n=0}^\infty \frac{a^n q^{sn}}{\left(q; q \right)_n}$$
which implies, via injectivity of the Mellin transform, that:
\begin{align}
\label{eq:q_gamma_proba}
\forall n \in \N, \ \P\left( R^{(q)}_a = \left(1-q\right)^{-1} q^n \right) = & \frac{\left(a; q \right)_\infty}{\left(q; q \right)_n} a^n \ .
\end{align}
To the best of the author's knowledge, the $q$-gamma distribution was first defined by Pakes (\cite{bib:pakes} section 5), motivated by length biasing. Equation \eqref{eq:q_gamma_proba} matches indeed with the last equation of page 844 \cite{bib:pakes}. So far, we preferred a naive point of view that avoids discussing why this definition yields the ``correct'' $q$-deformation. After all, from the point of view of $q$-calculus, a $q$-gamma distribution can be defined as soon as we have a $q$-deformation of the Gamma integral given for $x>0$:
$$ \Gamma(x) = \int_{\R_+} t^{x-1} e^{-t} dt \ ,$$
which serves to define the gamma distribution. Now, the $q$-integral of a function $f: \R_+ \rightarrow \R$ is given by (1.11 in \cite{bib:gasper_rahman}):
\begin{align}
\label{eq:def_q_integral}
\forall u \in \R_+, \ \int_0^u f(t) \ d_q t & := (1-q) \sum_{n \geq 0} q^n u f\left(q^n u \right) 
\end{align}
and we have:
\begin{align}
\label{eq:q_integral}
\forall x>0, \Gamma_q(x) = \left( 1 - q \right)^{1-x} \frac{\left(q; q \right)_{\infty}}{\left(q^x; q \right)_{\infty}}
                         = \int_{0}^{\frac{1}{1-q}} t^{x-1} E_q(-t) d_q t
\end{align}
where 
$$ \forall t \in \C, \ E_q(t) := \left( t(1-q); q \right)_{\infty}$$
is the $q$-exponential. In order to see that, take $a = q^x$ in \eqref{eq:q_gamma_proba} and using the fact that probability weights sum up to $1$:
\begin{align*}
\left( 1 - q \right)^{1-x} \frac{\left(q; q \right)_{\infty}}{\left(q^x; q \right)_{\infty}} 
\quad = \quad & \left( 1 - q \right)^{1-x} \frac{\left(q; q \right)_{\infty}}{\left(q^x; q \right)_{\infty}} \sum_{n \geq 0} \P\left( R^{(q)}_{q^x} = \left(1-q\right)^{-1} q^n \right) \\
\quad = \quad & \left( 1 - q \right)^{1-x} \sum_{n \geq 0} q^{nx} \left(q^n; q \right)_{\infty}\\
\quad = \quad & \sum_{n \geq 0} q^n \left( \frac{q^{n}}{1-q} \right)^{x-1} \left(q^n; q \right)_{\infty}\\
\stackrel{eq. \eqref{eq:def_q_integral}}{=} & \int_{0}^{\frac{1}{1-q}} t^{x-1} E_q(-t) d_q t
\end{align*}
De Sole and Kac \cite{bib:desole_kac05} convincingly argue that \eqref{eq:q_integral} is indeed the ``correct'' expression among other possible candidates because of its compatibility with other formulae from $q$-calculus. As mentioned in the introduction, our approach can be seen as another argument in favor of De Sole and Kac's claim, with a probabilistic point of view.

\subsection{Wiener-Hopf factorization into independent random variables}

The random variable $I^{(q, z)}$ can be factored into the product of two independent random variables, one of which is an inverse $q$-gamma:
\begin{corollary}
\label{corollary:cv_gamma}
The following equality in law holds:
$$ I^{(q, z)} \stackrel{\Lc}{=} q^{-\sum_{n=0}^\infty G_{z q^n}} I^{(q)} = \frac{\left( 1- q \right)^{-1} I^{(q)}}{ R^{(q)}_z}$$
where $I^{(q)}$ and $R^{(q)}_z$ are independent random variables.

In particular, as $q \rightarrow 1$, $\left(1-q\right)^2 I^{(q, q^\mu)}$ converges in law to $\frac{1}{\gamma_\mu}$ with $\gamma_\mu$ a gamma random variable with parameter $\mu$.
\end{corollary}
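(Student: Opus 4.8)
Since $R^{(q)}_z = (1-q)^{-1} q^{\sum_{n=0}^\infty G_{zq^n}}$ by \eqref{eq:def_q_gamma}, the two right-hand sides are literally the same expression, so it suffices to prove
\[
I^{(q,z)} \eqlaw q^{-\sum_{n=0}^\infty G_{zq^n}}\, I^{(q)}
\]
with the two factors independent, and I would do this by matching Mellin transforms on the imaginary axis. Specialising Theorem \ref{thm:main} to $z=0$ gives
\[
\E\left(\left(I^{(q)}\right)^s\right) = \Gamma(1+s)\,\frac{\left(q^{1+s};q\right)_\infty}{\left(q;q\right)_\infty},
\]
while \eqref{eq:mellin_q_gamma} with $a=z$ and $s$ replaced by $-s$ gives
\[
\E\left(\left(q^{-\sum_{n=0}^\infty G_{zq^n}}\right)^s\right) = \E\left(\left((1-q)R^{(q)}_z\right)^{-s}\right) = \frac{\left(z;q\right)_\infty}{\left(zq^{-s};q\right)_\infty}.
\]
By independence the Mellin transform of the product is the product of these two, and multiplying out reproduces exactly the first closed form for $\E\big((I^{(q,z)})^s\big)$ in Theorem \ref{thm:main}. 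All three transforms are finite and analytic on the strip $-1 < \Re(s) < \mu$ --- the only poles being those of $\Gamma(1+s)$ at $\Re(s) \in -1-\N$ and those of $1/(zq^{-s};q)_\infty = 1/(q^{\mu-s};q)_\infty$ at $\Re(s) \in \mu+\N$ --- and this strip contains $i\R$ because $\mu>0$. On the line $s=it$ the Mellin transform of $I^{(q,z)}$ therefore coincides with that of the independent product $q^{-\sum_{n=0}^\infty G_{zq^n}}\, I^{(q)}$, and since these are the characteristic functions of the logarithms of two positive random variables, the claimed equality in law follows by injectivity of the Fourier transform.

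\textbf{The scaling limit.} Here I would \emph{not} pass to the limit inside the $q$-Gamma form of the Mellin transform, because the prefactor $(1-q)^{-2s}$ there is oscillatory along $i\R$ and only cancels against the limits of $1/\Gamma_q(1+s)$ and $\Gamma_q(\mu-s)$ through the asymptotics of $(q^x;q)_\infty$ at complex $x$. Instead, the factorisation just established gives, for $z=q^\mu$,
\[
\left(1-q\right)^2 I^{(q, q^\mu)} \eqlaw \frac{\left(1-q\right) I^{(q)}}{R^{(q)}_{q^\mu}},
\]
the numerator and denominator being independent. For the numerator, $I^{(q)} = \sum_{n\geq0} q^n \varepsilon_n$ with i.i.d.\ standard exponential $\varepsilon_n$, so $\E\big[(1-q)I^{(q)}\big]=1$ and $\Var\big[(1-q)I^{(q)}\big] = (1-q)^2/(1-q^2) = (1-q)/(1+q) \to 0$ as $q\to1^-$; hence $(1-q)I^{(q)} \to 1$ in $L^2$, in particular in probability. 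For the denominator, $R^{(q)}_{q^\mu}\to\gamma_\mu$ in law --- the observation already recorded after \eqref{eq:mellin_q_gamma} --- and $\gamma_\mu>0$ almost surely. Slutsky's theorem together with the continuous mapping theorem then give convergence in law of the ratio to $1/\gamma_\mu$, as claimed.

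\textbf{Where the work is.} With Theorem \ref{thm:main} in hand, neither half is deep. The two points that need attention are the bookkeeping of the common strip of analyticity in the first part --- so that the Mellin identity genuinely holds on $i\R$ and injectivity applies, which is fine since the poles sit at $\Re(s) \in -1-\N$ and $\Re(s) \in \mu+\N$, off the imaginary axis --- and, in the second part, deliberately routing the scaling limit through the elementary second-moment bound for $(1-q)I^{(q)}$ and the known convergence $R^{(q)}_{q^\mu}\to\gamma_\mu$, rather than through the complex-argument $q$-Gamma asymptotics that a direct attack would force.
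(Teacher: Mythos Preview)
Your proposal is correct and follows essentially the same route as the paper: the equality in law is obtained by matching Mellin transforms (using Theorem \ref{thm:main} and \eqref{eq:mellin_q_gamma}), and the scaling limit is obtained by combining the known convergence $R^{(q)}_{q^\mu}\to\gamma_\mu$ with the second-moment computation showing $(1-q)I^{(q)}\to 1$ in probability. The paper's proof is simply terser --- it states ``Mellin transforms of both sides are equal'' without spelling out the strip of analyticity --- but the content is the same.
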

\begin{proof}
For the equality in law, Mellin transforms of both sides are equal.

Since we already know that $R^{(q)}_{q^\mu}$ converges in law to $\gamma_\mu$ it is sufficient to prove that $(1-q) I^{(q)}$ converges to $1$ in probability, which is implied by:
$$ \E\left( (1-q) I^{(q)} \right) \stackrel{eq. \eqref{eq:def_perpetuity2}}{=}
   \sum_{n=0}^\infty (1-q) q^n \E( \varepsilon_n ) = 1 \ .$$
$$ \Var\left( (1-q) I^{(q)} \right) \stackrel{eq. \eqref{eq:def_perpetuity2}}{=}
   \sum_{n=0}^\infty (1-q)^2 q^{2n} \Var( \varepsilon_n ) = \half \frac{1-q}{1+q} \stackrel{q \rightarrow 1}{\longrightarrow} 0 \ .$$
\end{proof}

The Wiener-Hopf factorization is a general phenomenon for exponential functionals of L\'evy processes. As in the work of \cite{bib:PPS12} and \cite{bib:PS12}, the term $I^{(q)}$ appearing in the previous factorization is the exponential functional associated to the descending ladder height process of $\zeta \log q$. In this case, $I^{(q)} = I^{(q,z=0)}$ corresponds indeed to the exponential functional if the random walk $B$ only goes upward.

However, the other term i.e the inverse $q$-gamma variable is not the exponential functional of a L\'evy process. Among the possible arguments for that claim, the $q$-gamma variable is discrete, while exponential functionals are known to have a density - very degenerate cases aside (theorem 2.2 in \cite{bib:blr08}). This result provides a good example to Corollary 2.2 in \cite{bib:PS13}, where Patie and Savov give a necessary and sufficient condition for this second term to be an exponential functional.

\begin{rmk}
This Wiener-Hopf factorization in law is, to the author, quite puzzling. Indeed, the perpetuity on the left-hand side involves adding  both positive and negative powers of $q$ randomly. On the right-hand side, we see that this is tantamount \emph{in law} to paying the price of certain negative power of $q$ that is a sum of geometric random variables, then multiplying by the exponential functional associated to the descending ladder height process of $\zeta_t \log q$.

A trajectorial explanation of such an equality would certainly be desirable. The same question is raised in \cite{bib:PS13} for the Wiener-Hopf factorizations of exponential functionals associated to more general L\'evy processes.
\end{rmk}

For general L\'evy processes, the distribution of the remainder random variable $R_z^{(q)}$ is extracted from the L\'evy measure of the descending ladder process $H^-$ (see the expressions in Theorem 1.9 \cite{bib:alili}).

\subsection{Scaling limit}
\label{subsection:scaling_limit}
The thorough reference for weak convergence in sample path spaces is \cite{bib:billingsley}. The acronym càdlàg stands for ``continu à droite, limité à gauche'' in French and means right-continuous with left-limits. Let $D_\infty$ be the Skorohod space of functions on $\R$:
$$ D_\infty := \left\{ f: \R_+ \rightarrow \R \ | \ f \textrm{ is càdlàg } \right\} \ .$$
Let $d$ be the following distance which metrizes the topology of uniform convergence on compact sets:
$$ \forall (x,y) \in D_\infty \times D_\infty, \ d(x,y) := \sum_{n=1}^\infty \frac{1 \wedge d_n(x,y)}{2^n}$$
with
$$ d_n(x,y) := \sup_{0 \leq t \leq n} |x(t)-y(t)| \ .$$
As such, we will refer to $d$ as the local uniform distance. The induced topology is the (locally) uniform topology and is finer than the usual Skorohod topology on $D_\infty$. However it is not separable. There is no essential need for defining the usual Skorohod topology, but the reader is referred to section 12 in \cite{bib:billingsley} for completeness. $\Dc$ is the ball $\sigma$-field for the local uniform distance - or equivalently the Borel $\sigma$-field for the usual Skorohod topology ( See equation (15.2) in \cite{bib:billingsley}).

We are concerned with the weak convergence of the following sequence of compound Poisson processes indexed by $q$:
\begin{align}
\label{eq:def_W_q}
\left( W_t^{(q, \mu)} ; t \geq 0 \right) := & \left( \frac{-\log q}{2} \ \zeta_{ \frac{2 t}{(1-q)^2} } ; t \geq 0 \right) \ ,
\end{align}
jointly with their exponential functionals $I\left( W^{(q, \mu)} \right)$ where:
 $$\begin{array}{cccc}
    I: & D_\infty & \rightarrow & \R \sqcup \{ \infty \} \\
       &     x    & \mapsto     & 2 \int_0^\infty e^{-2 x(s)} ds \ .
   \end{array}$$

The following theorem deals with the sequence of probability measures
$$ \Lc\left( W^{(q, \mu)} , I\left( W^{(q, \mu)} \right) \right)$$
on $\left( D_\infty \times \R_+, \Dc \otimes Bor\left( \R_+ \right) \right)$. There are three reasons why we choose to work in the locally uniform topology on the Skorohod space. First, the functional $I$ is better controlled under variations of its argument's uniform norm. In the usual Skorohod topology, one would have to deal with a troublesome time rescaling as well. Secondly, our result in this setting is stronger as the uniform topology has less open sets. And thirdly, our presentation is improved by not having to define the Skorohod topology.

\begin{proposition}
\label{proposition:cv_bm}
$$\left( 1 - q \right)^2 I^{(q,z)} = I\left( W^{(q, \mu)} \right) \ .$$
and we have the weak convergence on $\left( D_\infty \times \R_+, \Dc \otimes Bor\left( \R_+ \right) \right)$:
$$ \left( W^{(q, \mu)}, I\left( W^{(q, \mu)} \right) \right) \stackrel{q \rightarrow 1^-}{\longrightarrow}
   \left( W^{(\mu)}   , I\left( W^{(   \mu)} \right) \right) \ .
$$
\end{proposition}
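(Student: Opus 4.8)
\emph{Proof strategy.} The first identity is a deterministic change of variables: since $W^{(q,\mu)}_s=\frac{-\log q}{2}\zeta_{2s/(1-q)^2}$ gives $e^{-2W^{(q,\mu)}_s}=e^{(\log q)\zeta_{2s/(1-q)^2}}=q^{\zeta_{2s/(1-q)^2}}$, the substitution $u=2s/(1-q)^2$ turns $I(W^{(q,\mu)})=2\int_0^\infty q^{\zeta_{2s/(1-q)^2}}\,ds$ into $(1-q)^2\int_0^\infty q^{\zeta_u}\,du=(1-q)^2 I^{(q,z)}$. I would dispose of this in one line, and then prove the weak convergence in three steps: (i) convergence of the driving paths $W^{(q,\mu)}$; (ii) joint convergence with the \emph{truncated} functional $I_T(x):=2\int_0^T e^{-2x(s)}\,ds$, which — unlike $I$ itself — is continuous for the local uniform distance $d$; (iii) removal of the truncation by a tail estimate uniform in $q$ near $1$.

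\emph{Step (i).} The rescaled compound Poisson process $W^{(q,\mu)}$ is still a Lévy process, so it is enough to control its characteristic exponent $\theta\mapsto\frac{2}{(1-q)^2}\big[(e^{-i\theta(\log q)/2}-1)+q^\mu(e^{i\theta(\log q)/2}-1)\big]$; a second-order Taylor expansion shows it converges, locally uniformly in $\theta$, to $i\mu\theta-\tfrac12\theta^2$, the exponent of $W^{(\mu)}_t=B_t+\mu t$ (Brownian motion with drift $\mu$). A Donsker-type invariance principle (see \cite{bib:billingsley}) then gives weak convergence in the Skorohod topology; since $W^{(\mu)}$ has continuous paths this coincides with convergence for $d$, and — the limiting law being carried by the Polish subspace $C$ of continuous functions — it is a weak convergence on $(D_\infty,\Dc)$ for the local uniform topology (section 15 of \cite{bib:billingsley}).

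\emph{Step (ii).} For fixed $T$, the bound $|I_T(x)-I_T(y)|\le 2\big(e^{2\eta}-1\big)\int_0^T e^{-2x(s)}\,ds$ with $\eta=\sup_{0\le s\le T}|x(s)-y(s)|$ shows $I_T$ is $d$-continuous, and it is plainly $\Dc$-measurable; the continuous mapping theorem therefore gives $\big(W^{(q,\mu)},I_T(W^{(q,\mu)})\big)\Rightarrow\big(W^{(\mu)},I_T(W^{(\mu)})\big)$. As $T\uparrow\infty$, $I_T(W^{(\mu)})\uparrow I(W^{(\mu)})<\infty$ almost surely (classical, or read off Corollary \ref{corollary:cv_gamma}), hence $\big(W^{(\mu)},I_T(W^{(\mu)})\big)\Rightarrow\big(W^{(\mu)},I(W^{(\mu)})\big)$. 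Granting Step (iii), the ``convergence together'' lemma (Theorem 3.2 of \cite{bib:billingsley}), applied on $D_\infty\times\R_+$ — where the distance between the two approximating pairs reduces precisely to $|I(W^{(q,\mu)})-I_T(W^{(q,\mu)})|$ — yields $\big(W^{(q,\mu)},I(W^{(q,\mu)})\big)\Rightarrow\big(W^{(\mu)},I(W^{(\mu)})\big)$, which is the assertion.

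\emph{Step (iii), which I expect to be the main obstacle.} I must show $\lim_{T\to\infty}\limsup_{q\to1^-}\P\big(I(W^{(q,\mu)})-I_T(W^{(q,\mu)})>\delta\big)=0$ for every $\delta>0$; this is the substitute for the (false) statement that $I$ is $d$-continuous, and it uses that all the $W^{(q,\mu)}$ drift to $+\infty$. By stationarity and independence of the increments of $W^{(q,\mu)}$ at time $T$, the remainder factorizes in law as $I(W^{(q,\mu)})-I_T(W^{(q,\mu)})=e^{-2W^{(q,\mu)}_T}\,I'$ with $I'\eqlaw I(W^{(q,\mu)})$ independent of $W^{(q,\mu)}_T$, so for any $a>0$ and any $s\in(0,\mu)$
\[
\P\big(e^{-2W^{(q,\mu)}_T}I'>\delta\big)\ \le\ \P(I'>a)+\P\big(W^{(q,\mu)}_T<\tfrac12\log(a/\delta)\big)\ \le\ a^{-s}\,\E\big[(I(W^{(q,\mu)}))^s\big]+\P\big(W^{(q,\mu)}_T<\tfrac12\log(a/\delta)\big).
\]
By Theorem \ref{thm:main} and the first identity, $\E\big[(I(W^{(q,\mu)}))^s\big]=\Gamma(1+s)\Gamma_q(\mu-s)/\big(\Gamma_q(1+s)\Gamma_q(\mu)\big)\to\Gamma(\mu-s)/\Gamma(\mu)<\infty$ as $q\to1^-$, so $C_s:=\sup_{q}\E\big[(I(W^{(q,\mu)}))^s\big]<\infty$; and by Step (i) with continuity of the Gaussian distribution function, $\limsup_{q\to1^-}\P\big(W^{(q,\mu)}_T<c\big)=\Phi\big((c-\mu T)/\sqrt T\big)$ for every $c$. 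Hence the $\limsup$ over $q$ is at most $C_s a^{-s}+\Phi\big((\tfrac12\log(a/\delta)-\mu T)/\sqrt T\big)$; sending $T\to\infty$ kills the second term (this is where $\mu>0$ enters) and then $a\to\infty$ kills the first. Once this uniform tail bound is in hand, Steps (i)--(ii) are routine bookkeeping; it is really the only delicate point, and it is where the main theorem gets used.
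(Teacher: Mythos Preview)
Your proof is correct and follows the same three-step strategy as the paper (Donsker for the paths, continuity of the truncated functional $I_T$, and a uniform-in-$q$ tail bound exploiting the Mellin transform of Theorem~\ref{thm:main}). The only cosmetic differences are that the paper handles the tail by a single Markov inequality on $e^{-2\nu W^{(q,\mu)}_T}(I')^{\nu}$, using the L\'evy exponent $\psi_q(-2\nu)\to 2\nu(\nu-\mu)<0$ to get exponential decay in $T$ directly, rather than your two-event split, and it writes out explicitly the $\varepsilon$--$\delta$ argument that your ``convergence together'' citation packages.
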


Putting together proposition \ref{proposition:cv_bm} and corollary \ref{corollary:cv_gamma} recovers Dufresne's identity in law \cite{bib:Dufresne90}:
$$ 2 \int_0^\infty e^{- 2 W_s^{(\mu)}} ds \eqlaw \frac{1}{\gamma_\mu} \ .$$

If one is only interested in proving that $I\left( W^{(q, \mu)} \right)$ converges weakly to $I\left( W^{(   \mu)} \right)$, which is the convergence of the second marginal in the previous theorem, one can invoke general approximation theorems such as theorem 7.3 in \cite{bib:bl13} or lemma 4.8 in \cite{bib:PPS12}. As mentioned in their text, the latter result was specifically tailored to by-pass the issue that exponential functionals are not continuous on $D_\infty$. One cannot a priori invoke the mapping theorem 2.7 in \cite{bib:billingsley}. The problem arises from the infinite time horizon: two paths may be close to each other in the locally uniform topology and yet be very different in the long run, causing their exponential functionals to be also very different. Of course, these unfavorable events occur with small probability. Not only our proof gives joint convergence, but it also shows that the mapping theorem can be applied, upon controlling such unfavorable events. The argument is robust and simple enough to generalize to other Lévy processes, although we have not explored that possibility.

\section{Proofs}
\label{section:proofs}

\subsection{Proof of theorem \ref{thm:main}}
Let us comment on the proof of \cite{bib:bby} which handles the case of $z=0$ in theorem \ref{thm:main}. It goes as follows. Consider two independent random variables $R^{(q)}_q$ and $I^{(q)}$. After computing the entire moments of $I^{(q)}$, one can notice that those of $(1-q) R^{(q)}_q I^{(q)}$ match the moments of a standard exponential random variable $\varepsilon$. Since the exponential distribution is characterized by its moments, we can deduce $(1-q) R^{(q)}_q I^{(q)} \eqlaw \varepsilon$. One concludes by taking the Mellin transform on both sides of the previous equality in law. In the proof of theorem \ref{thm:main} though, there is a small catch. If the random variable $I^{(q)}$ has entire moments, $I^{(q, z)}$ does not for $z>0$. Therefore, we will have to take a different path - through complex analysis. The crux of the argument in computing the Mellin transform is Carlson's theorem (see 9.2.1 in \cite{bib:boas}). It allows under certain conditions to identify two entire functions that match on the positive integers.

\paragraph{Mellin transform:} Recall that
$$ \E\left( e^{-s \log(q) \zeta_t} \right) = \exp\left( t \psi(s) \right)$$
where the L\'evy-Khintchine exponent is:
$$\psi(s) = \left( q^{-s} - 1 \right) + z\left( q^s - 1 \right) = \left( q^{-s} - 1 \right) \left(1-z q^s \right) \ .$$
It satisfies (8) and (9) in \cite{bib:by}, thereby confirming the existence of negative moments for $I^{(q,z)}$ (theorem 3 in \cite{bib:by}). Since a Mellin transform is always holomorphic on the interior of the vertical strip where it converges absolutely, $s \mapsto \E\left( \left( I^{(q,z)} \right)^{s} \right)$ is holomorphic on the left half of the complex plane.

Let $s = -r$, with $\Re(r)>0$. One has the recurrence formula:
\begin{align}
\label{eq:mellin_recurrence}
\E\left( \left( I^{(q,z)} \right)^{-r} \right)
= & \frac{r}{\left( q^{-r} - 1 \right) \left(1-q^{\mu+r} \right)} \E\left( \left( I^{(q,z)} \right)^{-(r+1)} \right) \ .
\end{align}
Indeed, from equation (11) in \cite{bib:by}:
\begin{align*}
\E\left( \left( I^{(q,z)} \right)^{-r} \right)
& = \frac{r}{\psi(r)} \E\left( \left( I^{(q,z)} \right)^{-(r+1)} \right)\\
& = \frac{r}{\left( q^{-r} - 1 \right) \left(1-z q^r \right)} \E\left( \left( I^{(q,z)} \right)^{-(r+1)} \right)\\
& = \frac{r}{\left( q^{-r} - 1 \right) \left(1-q^{\mu+r} \right)} \E\left( \left( I^{(q,z)} \right)^{-(r+1)} \right) \ .
\end{align*}
The idea is rather simple: Equation \eqref{eq:mellin_recurrence} identifies the Mellin transform on integers, as a meromorphic function. Then, Carleson's theorem enters the picture in order to discriminate between the different possibilities. More precisely, the recurrence allows to extend the map $r \mapsto \E\left( \left( I^{(q,z)} \right)^{-r} \right)$ to all of $\C$ except when $r \in -\mu - \N$. These are precisely the poles and the Mellin transform admits a meromorphic extension to all of $\C$. Moreover, it is holomorphic on $\{ \Re(r)>-\mu \}$. 

Let us define the meromorphic function $\varphi$ on the right half plane:
$$\varphi(r) := -1 + 
                \frac{ \E\left( \left( I^{(q,z)} \right)^{-r} \right) }
                     { \Gamma\left( 1 - r \right) }
                \frac{\left( q; q \right)_\infty}
                     {\left( q^{1-r}; q \right)_\infty}
                \frac{ \left( z q^{r}; q \right)_\infty }
                     { \left( z; q \right)_\infty } \ .$$
We are done upon proving that $\varphi$ vanishes identically. Using the recurrence \eqref{eq:mellin_recurrence}, we find that:
$$ \varphi(0) = 0 , \varphi(r) = \varphi(r+1) \ ,$$
for $r$ outside of poles. Now, by examining its constituents, the function $\varphi$ is holomorphic on the strip $-\mu < \Re(r) < 1$. Moreover, this strip is of length larger than one. Because the function $\varphi$ is $1$-periodic, it is an entire function that vanishes on the integers. In order to see it vanishes identically, we invoke Carlson's theorem as announced (theorem 9.2.1 in \cite{bib:boas}); which requires from $\varphi$ not to grow too fast on vertical strips. In order to exclude maps such as $z \mapsto \frac{\sin\left( \pi z \right)}{\pi z}$, we need to check that $\varphi$ is a function of exponential type at most $\pi$. This is handled by the following lemma.

\begin{lemma}
$\varphi$ is a function of type $\half \pi$ i.e:
$$ \forall \varepsilon>0, \exists M_\varepsilon > 0, \forall r \in \C, |\varphi(r)| \leq M_\varepsilon e^{ (\frac{\pi}{2} + \varepsilon)|\Im(r)| } \ .$$
\end{lemma}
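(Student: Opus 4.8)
The plan is to use the $1$-periodicity of $\varphi$ to reduce to a single vertical strip of width one, then bound the four factors of $\varphi+1$ one at a time. Since $\varphi$ is holomorphic on $\{-\mu<\Re(r)<1\}$, a strip of width $1+\mu>1$, I would fix a closed vertical strip $S$ of width exactly one contained in it. By periodicity every $r\in\C$ equals $r_0+n$ with $r_0\in S$ and $n\in\Z$, and $|\Im(r)|=|\Im(r_0)|$, so it suffices to bound $|\varphi|$ on $S$; and since $\varphi$ is continuous, hence bounded, on each slab $\{r\in S:|\Im(r)|\le T\}$, only the growth as $|\Im(r)|\to\infty$ matters, which lets me absorb any factor that is $\Oc\bigl((1+|\Im(r)|)^{C}\bigr)$ into the term $e^{\varepsilon|\Im(r)|}$.

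On $S$ the real part $\Re(r)$ stays in a compact subset of $(-\mu,1)$. The two factors $\frac{(q;q)_\infty}{(q^{1-r};q)_\infty}$ and $\frac{(zq^{r};q)_\infty}{(z;q)_\infty}$ I expect to be harmless precisely because $\log q$ is real, so that $|q^{\pm r}|=q^{\pm\Re(r)}$ is independent of $\Im(r)$: on $S$ there is $\eta>0$ with $|q^{1+j-r}|\le q^{\eta+j}<1$ and $|zq^{r+j}|=q^{\mu+j+\Re(r)}\le q^{\eta+j}<1$ for all $j\ge0$, so both infinite products converge uniformly on $S$, with moduli squeezed between $\prod_{j\ge0}(1-q^{\eta+j})>0$ and $\prod_{j\ge0}(1+q^{\eta+j})<\infty$; hence these two ratios are bounded above and below by positive constants on $S$. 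The Mellin-transform factor I would control by plain domination: $S\subset\{\Re(r)>-\mu\}$, where the Mellin integral converges absolutely because $I^{(q,z)}$ has finite negative moments of all orders and finite positive moments of every order $<\mu$ (theorem 3 in \cite{bib:by}); hence there $\E\bigl((I^{(q,z)})^{-r}\bigr)$ equals that integral, and
$$\bigl|\E\bigl((I^{(q,z)})^{-r}\bigr)\bigr|\;\le\;\E\bigl((I^{(q,z)})^{-\Re(r)}\bigr),$$
a finite continuous function of $\Re(r)$, hence bounded on the compact $\Re(r)$-range of $S$.

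This leaves $\frac{1}{\Gamma(1-r)}$, which is where the exponential type $\tfrac{\pi}{2}$ comes from, and the only place it does. I would invoke Stirling's asymptotics for $\Gamma$ along vertical lines, $|\Gamma(u+iv)|=\sqrt{2\pi}\,|v|^{u-1/2}e^{-\pi|v|/2}\bigl(1+\Oc(1/|v|)\bigr)$ as $|v|\to\infty$, uniformly for $u$ in a compact set, applied with $u=1-\Re(r)$ (which stays in a compact subset of $(0,\infty)$) and $v=-\Im(r)$: this gives $|\Gamma(1-r)|^{-1}\ll|\Im(r)|^{\Re(r)-1/2}e^{\pi|\Im(r)|/2}$ for $|\Im(r)|$ large, and as $\Re(r)-\tfrac12$ is bounded on $S$ the polynomial prefactor is $\ll_\varepsilon e^{\varepsilon|\Im(r)|}$, while for bounded $|\Im(r)|$ the entire function $1/\Gamma(1-\cdot)$ is bounded; altogether $|\Gamma(1-r)|^{-1}\ll_\varepsilon e^{(\pi/2+\varepsilon)|\Im(r)|}$ on $S$. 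Multiplying the four bounds, and using $1\le e^{(\pi/2+\varepsilon)|\Im(r)|}$ to swallow the constant $-1$ in $\varphi$, yields $|\varphi(r)|\le M_\varepsilon e^{(\pi/2+\varepsilon)|\Im(r)|}$ on $S$, hence on all of $\C$ by periodicity.

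The hard part --- in fact the only subtle point --- is that the Gamma factor must contribute exactly exponential type $\tfrac{\pi}{2}$ and no more: there is no margin, since $\tfrac{\pi}{2}$ is precisely half of the threshold $\pi$ tolerated by Carlson's theorem, and the subexponential corrections from Stirling's formula must be absorbed --- harmlessly but deliberately --- into the free $\varepsilon$. Everything else is either genuinely bounded on vertical lines (the two $q$-Pochhammer ratios, thanks to $\log q\in\R$) or tamed by the crude inequality $|\E(X)|\le\E(|X|)$ applied to an absolutely convergent integral.
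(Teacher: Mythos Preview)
Your proof is correct and follows essentially the same route as the paper: reduce via $1$-periodicity to a closed vertical strip inside $\{-\mu<\Re(r)<1\}$, bound the Mellin-transform factor and the two $q$-Pochhammer ratios uniformly there (since $|q^{\pm r}|$ depends only on $\Re(r)$), and extract the exponential type $\tfrac{\pi}{2}$ from Stirling's asymptotics for $1/\Gamma(1-r)$. The only cosmetic difference is that the paper takes a strip of width slightly larger than one while you take width exactly one; both choices work, and your closing remark that ``there is no margin'' is a little overstated---type $\tfrac{\pi}{2}+\varepsilon$ sits comfortably below the Carlson threshold $\pi$.
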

\begin{proof}
Given that $\varphi$ is $1$-periodic, it suffices to prove the result for $\Re(r) \in \left[ \sigma_- ; \sigma_+ \right] \subset \left(-\mu; 1 \right)$, with $\left[ \sigma_- ; \sigma_+ \right]$ having a length larger than $1$. In that domain:
$$ \left| \E\left( \left( I^{(q,z)} \right)^{-r} \right) \right| \leq  \E\left( \left( I^{(q,z)} \right)^{-\sigma_-}  + \left( I^{(q,z)} \right)^{-\sigma_+} \right) \ll 1 \ ,$$
$$ \left| \left( z q^{r}; q \right)_\infty \right| \leq \left( -q^{\mu+\sigma_-}; q \right)_\infty \ll 1 \ ,$$
$$ \left| \frac{1}{\left( q^{1-r}; q \right)_\infty} \right| \leq \frac{1}{\left( q^{1-\sigma_+}; q \right)_\infty} \ll 1 \ .$$
For the two previous dominations, we respectively used the fact that the $q$-Pochhammer symbol $\left( x; q \right)_\infty$ satisfies $\left|\left( x; q \right)_\infty \right| \leq \left( -|x|; q \right)_\infty$ for $x \in \C$ and that it is decreasing for $x \in (-\infty, 1) \subset \R$. Thus:
$$   |\varphi(r)| 
\ll  1 + \left| \frac{1}{\Gamma\left( 1 - r \right)} \right| \ .$$
To conclude, we apply the complex Stirling formula ( e.g Proposition IV.1.4 in \cite{bib:freitag_busam}) with the variable going to infinity within a vertical strip. As $t \rightarrow \infty$ with $\sigma$ in a fixed compact $K$, we have:
$$ \Gamma\left( \sigma + it \right) \sim \sqrt{2 \pi} \left( i t\right)^{\sigma - \half} e^{-\frac{\pi}{2} |t|}
               \left( \frac{|t|}{e} \right)^{it} \left( 1 + O_K(\frac{1}{|t|}) \right) \ .$$
In particular:
$$ \forall \varepsilon>0, \exists M_\varepsilon>0, \forall \Re(r) \in \left[ \sigma_- ; \sigma_+ \right], |\varphi(r)| \leq M_\varepsilon e^{ (\frac{\pi}{2} + \varepsilon)|\Im(r)| } \ .$$
\end{proof}

\paragraph{Density:}
The expression for the density is obtained via inversion of the Mellin transform
\begin{align}
\label{eq:mellin_transform}
 \E\left( \left( I^{(q, z)} \right)^s \right) 
= & \Gamma\left(1 + s\right) \frac{\left( q^{1+s}; q \right)_\infty}
                                  {\left( q; q \right)_\infty}
                             \frac{ \left( z; q \right)_\infty }
                                  { \left( z q^{-s}; q \right)_\infty }. 
\end{align}
In the following, all series are absolutely convergent for $0<z<1$, $0< q < 1$ and $\Re(s)<\mu$, making the use of Fubini's theorem possible in order to exchange summations and integrals. As in \cite{bib:bby}, start by the Euler formula:
$$ \left( q^{1+s}; q \right)_\infty = \sum_{n=0}^\infty \frac{(-1)^n q^{ \binom{n}{2} } }{\left(q;q\right)_n} q^{n(1+s)}$$
together with 
$$ q^{n(1+s)} \Gamma(1+s) = \int_0^\infty x^s e^{ -x/q^n } dx$$
in order to obtain the first part of the right-hand side in equation \eqref{eq:mellin_transform}
\begin{align}
\label{eq:density_part1}
\Gamma(1+s) \frac{\left( q^{1+s}; q \right)_\infty}{\left( q; q \right)_\infty} & = \frac{1}{\left( q; q \right)_\infty} \int_0^\infty dx \ x^s \sum_{n=0}^\infty \frac{(-1)^n q^{ \binom{n}{2} } }{\left(q;q\right)_n} e^{ -x/q^n } \ .
\end{align}
Now, for the second part of the expression, an application of the $q$-binomial theorem (eq. \eqref{eq:q_binomial_thm}) yields
\begin{align}
\label{eq:density_part2}
\frac{ \left( z; q \right)_\infty }{ \left( z q^{-s}; q \right)_\infty } = & 
\sum_{m=0}^\infty \left( z; q \right)_\infty \frac{z^m q^{-ms}}{ \left( q; q \right)_m } \ .
\end{align}
Forming the product of \eqref{eq:density_part1} and \eqref{eq:density_part2} gives:
\begin{align*}
  & \E\left( \left( I^{(q, z)} \right)^s \right) \\
= \quad & \Gamma\left(1 + s\right) \frac{\left( q^{1+s}; q \right)_\infty}
                                  {\left( q; q \right)_\infty}
                             \frac{ \left( z; q \right)_\infty }
                                  { \left( z q^{-s}; q \right)_\infty }\\
= \quad & \frac{\left( z; q \right)_\infty}{\left( q; q \right)_\infty}
          \left[ \sum_{m=0}^\infty \frac{z^m q^{-ms}}{ \left( q; q \right)_m } \right]
          \left[ \int_0^\infty dx \ x^s \sum_{n=0}^\infty \frac{(-1)^n q^{ \binom{n}{2} } }{\left(q;q\right)_n} e^{ -x/q^n } \right]\\
= \quad & \frac{\left( z; q \right)_\infty}{\left( q; q \right)_\infty} \sum_{m=0}^\infty \frac{z^m}{ \left( q; q \right)_m } \int_0^\infty dx \ \left( x q^{-m} \right)^s \sum_{n=0}^\infty \frac{(-1)^n q^{ \binom{n}{2} } }{\left(q;q\right)_n} e^{ -x/q^n }\\
\stackrel{(y=x q^{-m})}{=} & \frac{\left( z; q \right)_\infty}{\left( q; q \right)_\infty} \sum_{m=0}^\infty \frac{(qz)^m}{ \left( q; q \right)_m } \int_0^\infty dy \ y^s \sum_{n=0}^\infty \frac{(-1)^n q^{ \binom{n}{2} } }{\left(q;q\right)_n} e^{ -y q^{m-n} }\\
\stackrel{(Fubini)}{=} & \int_0^\infty dy \ y^s \left[ \frac{\left( z; q \right)_\infty}{\left( q; q \right)_\infty} \sum_{m=0}^\infty \sum_{n=0}^\infty \frac{(qz)^m}{ \left( q; q \right)_m } \frac{(-1)^n q^{ \binom{n}{2} } }{\left(q;q\right)_n} e^{ -y q^{m-n} } \right] \ ,
\end{align*}
hence the result for the density.

\subsection{Proof of theorem \ref{proposition:cv_bm}}
To recognize the random variable $\left( 1 - q \right)^2 I^{(q,z)}$ as nothing but $I\left( W^{(q, \mu)} \right)$, we perform the change of variable $s = \frac{2 u}{(1-q)^2}$ in the time parameter:
$$
  \left( 1 - q \right)^2 I^{(q,z)} 
= \left( 1 - q \right)^2 \int_0^\infty q^{\zeta_s} ds 
= 2 \int_0^\infty q^{ \zeta_{ \frac{2 u}{(1-q)^2} } } du
= 2 \int_0^\infty e^{ -2 W_s^{(q, \mu)}} ds \ .
$$

And, in order to prove that for any bounded continuous functional $F: D_\infty \times \R \rightarrow \R$:
$$ \E\left( F\left( W^{(q, \mu)}, 2 \int_0^\infty e^{ -2 W_s^{(q, \mu)}} ds \right) \right) \stackrel{q \rightarrow 1^-}{\longrightarrow}
   \E\left( F\left( W^{(\mu)}, 2 \int_0^\infty e^{ -2 W_s^{(\mu)}} ds \right) \right), $$
we will approximate the exponential functional $I(\cdot)$ by the exponential functional $I_T(\cdot)$ up to a finite horizon $T>0$:
 $$\begin{array}{cccc}
    I_T: & D_\infty & \rightarrow & \R \\
       &     x    & \mapsto     & 2 \int_0^T e^{-2 x(s)} ds
   \end{array}$$
which is continuous for the locally uniform topology. Also, we adopt the convention that $W^{(q, \mu)}$ for $q=1$ refers to a Brownian motion with drift $\mu$. The rest of the proof is broken down into three steps.

\paragraph{ \bf Step 1: Weak convergence in the uniform topology on the Skorohod space}\mbox{}\\
Using Wald's identity:
$$ \E\left(   W_t^{(q, \mu)} \right) = -\frac{\log q}{2 (1-q)^2} (1+z) t \E\left( B_1 \right) = -\frac{\left( 1-q^\mu \right) \log q}{(1-q)^2} t \stackrel{q \rightarrow 1^-}{\longrightarrow} \mu t \ ,$$
$$ \Var\left( W_t^{(q, \mu)} \right) = \half \log^2 q \ (1+z) t(1-q)^{-2} \E\left( B_1^2 \right) \stackrel{q \rightarrow 1^-}{\longrightarrow} t \ .$$ 
Thanks to Donsker's invariance principle, we have weak convergence to $W^{(\mu)}$ in the usual Skorohod topology. Because the statement is usually phrased for the rescaled discrete random walk, one can adapt theorem 14.1 in \cite{bib:billingsley} to the case of a compound Poisson process: the two previous estimates on mean and variance give tightness and because of the independence of increments, the convergence of finite dimensional marginals is obtained from the convergence of Lévy exponents. We record the expression of the Lévy exponent for later use:
$$ \E\left( e^{s W_t^{(q, \mu)}} \right) = e^{t \psi_q(s)}$$
where 
\begin{align}
\label{eq:levy_khintchine_exp_q} 
\psi_q(s) = & - 2\frac{1-q^{-\half s}}{1-q} \frac{1-q^{\mu+\half s}}{1-q} \stackrel{q \rightarrow 1^-}{\longrightarrow} \half s^2 + s \mu \ .
\end{align}

At this point, we need to upgrade convergence to the finer (locally uniform) topology. This is achieved by applying theorem 6.6 in \cite{bib:billingsley}. Indeed, the Wiener measure is supported on the space of continuous functions; which is separable for the locally uniform topology. Moreover convergence in the usual Skorohod topology to a continuous function implies local uniform convergence.

Now by applying the mapping theorem (theorem 2.7 in \cite{bib:billingsley}), we have that for all $T>0$:
\begin{align}
\label{eq:weak_cv_finite_T}
 \E\left( F\left( W^{(q, \mu)}, I_T\left( W^{(q, \mu)} \right) \right) \right) & \stackrel{q \rightarrow 1^-}{\longrightarrow}
 \E\left( F\left( W^{(   \mu)}, I_T\left( W^{(   \mu)} \right) \right) \right) \ .
\end{align}

\paragraph{ \bf Step 2: Uniform control of the error in $q$}\mbox{}\\
We prove that there is a $0 < q_0 < 1$ such that for all $\delta>0$:
\begin{align}
\label{eq:uniform_control}
\limsup_{T  \rightarrow \infty} \sup_{ q_0 \leq q \leq 1}
\P\left( \left| I\left( W^{(q, \mu)} \right) - I_T\left( W^{(q, \mu)} \right) \right| \geq \delta \right) & = 0 \ .
\end{align}

As $W^{(q, \mu)}$ has independent increments:
$$ I\left( W^{(q, \mu)} \right) = I_T\left( W^{(q, \mu)} \right) + e^{-2 W^{(q, \mu)}_T } I\left( \widetilde{ W^{(q, \mu)} } \right)$$
with $\widetilde{ W^{(q, \mu)} } := W^{(q, \mu)}_{T+\cdot} - W^{(q, \mu)}_{T}$ independent from $W^{(q, \mu)}_T$. By the Markov inequality, for $0 < \nu < \mu$:
\begin{align*}
         \P\left( \left| I\left( W^{(q, \mu)} \right) - I_T\left( W^{(q, \mu)} \right) \right| \geq \delta \right)
\leq \ & \E\left[ \left(\frac{1}{\delta}\right)^\nu e^{-2 \nu W^{(q, \mu)}_T } I\left( \widetilde{ W^{(q, \mu)} } \right)^\nu \right]\\
=    \ & \left(\frac{1}{\delta}\right)^\nu e^{T \psi_q(-2 \nu) } \E\left[ I\left( W^{(q, \mu)} \right)^\nu \right] \ .
\end{align*}
Let us examine the behavior of this bound as $q \rightarrow 1$. From the Mellin transform's explicit expression in theorem \ref{thm:main}, it is clear that $\E\left( I\left( W^{(q, \mu)} \right)^\nu \right)$ is uniformly bounded as $q \rightarrow 1$. Also, from \eqref{eq:levy_khintchine_exp_q}, $\psi_q(-2 \nu) \stackrel{q \rightarrow 1^-}{\longrightarrow} 2 \nu(\nu-\mu) < 0$. Hence the existence of a $0 < q_0 < 1$ such that for all $q_0  \leq q \leq 1$, $-\psi_q(-2 \nu) \geq \nu(\mu-\nu) > 0$. Therefore:
$$   \sup_{ q_0  \leq q \leq 1} \P\left( \left| I\left( W^{(q, \mu)} \right) - I_T\left( W^{(q, \mu)} \right) \right| \geq \delta \right)
 \leq \frac{e^{ - T \nu(\mu-\nu) }}{\delta^\nu} \sup_{ q_0  \leq q \leq 1} \E\left( I\left( W^{(q, \mu)} \right)^\nu \right),
$$
which is a finite quantity. This proves the claim \eqref{eq:uniform_control} upon taking $T \rightarrow \infty$.

\paragraph{ \bf Step 3: Conclusion}\mbox{}\\
Let $\varepsilon>0$. Without loss of generality, we can assume that $F$ is uniformly continuous in the second variable, thanks to the Portmanteau theorem. Let $\delta = \delta(\varepsilon)$ be the modulus of continuity of $F$ associated to $\varepsilon>0$:
$$ \forall w \in D_\infty, \ \left( |x-y| \leq \delta \Rightarrow \left| F(w,x) - F(w,y) \right| \leq \varepsilon \right) \ .$$
We have:
\begin{align}
\label{eq:inequality_control_delta}
\forall w \in D_\infty, \ \left| F\left( w, I(w) \right) - F\left( w, I_T(w) \right) \right| & 
\leq \varepsilon + 2 |F|_\infty \mathds{1}_{\left\{ |I(x)-I_T(x)| \geq \delta \right\}}  \ .
\end{align}

By combining the triangular inequality with \eqref{eq:inequality_control_delta}, we have for all $q_0 \leq q \leq 1$ and $T>0$:
\begin{align*}
     & \left| \E\left( F\left( W^{(q, \mu)}, I\left( W^{(q, \mu)} \right) \right) \right)
           -  \E\left( F\left( W^{(   \mu)}, I\left( W^{(   \mu)} \right) \right) \right) \right|\\
\leq & \left| \E\left( F\left( W^{(q, \mu)}, I_T\left( W^{(q, \mu)} \right) \right) \right)
           -  \E\left( F\left( W^{(   \mu)}, I_T\left( W^{(   \mu)} \right) \right) \right) \right|\\
     & + 2 \varepsilon + 2 |F|_\infty \sup_{q_0 \leq q' \leq 1} \P\left( \left| I\left( W^{(q', \mu)} \right) - I_T\left( W^{(q', \mu)} \right) \right| \geq \delta \right) \ .
\end{align*}
Now, thanks to the weak convergence \eqref{eq:weak_cv_finite_T}, the following holds for all $T>0$:
\begin{align*}
 & \limsup_{q \rightarrow 1^-}
   \left| \E\left( F\left( W^{(q, \mu)}, I\left( W^{(q, \mu)} \right) \right) \right)
       -  \E\left( F\left( W^{(   \mu)}, I\left( W^{(   \mu)} \right) \right) \right) \right|\\
\leq \ & 2 \varepsilon + 2 |F|_\infty \sup_{q_0 \leq q' \leq 1} \P\left( \left| I\left( W^{(q', \mu)} \right) - I_T\left( W^{(q', \mu)} \right) \right| \geq \delta \right) \ .
\end{align*}
Using the uniform control estimate \eqref{eq:uniform_control} and taking $T \rightarrow \infty$ in the previous expression:
$$ \limsup_{q \rightarrow 1^-}
   \left| \E\left( F\left( W^{(q, \mu)}, I\left( W^{(q, \mu)} \right) \right) \right)
       -  \E\left( F\left( W^{(   \mu)}, I\left( W^{(   \mu)} \right) \right) \right) \right|
\leq 2 \varepsilon \ .
$$
We are done as $\varepsilon>0$ is arbitrary.

\section*{Acknowledgments}
The author would like to thank, one the one hand, Alexander Watson and Juan Carlos Pardo for encouraging him to write this note; and on the other hand Mladen Savov for pointing out references on Wiener-Hopf factorizations.

\small
\bibliographystyle{halpha}
\bibliography{Bib_q_functionals}

\end{document}